\documentclass{amsart}
\usepackage{amssymb,latexsym,amsmath,amsthm,enumitem,mathrsfs,geometry}
\usepackage{fullpage}
\usepackage{fancyhdr}
\usepackage{color}
\usepackage{stmaryrd}
\usepackage{mathrsfs}
\usepackage{hyperref}
\usepackage{lineno}
\usepackage{diagbox}
\usepackage{array}
\usepackage{tikz}
\usetikzlibrary{arrows}
\usetikzlibrary{positioning}
\usepackage{float}
\usepackage{bbm}
\usepackage{cleveref}





\newtheorem*{thm*}{Theorem}

\newcommand{\beq}{\begin{equation}}
\newcommand{\eeq}{\end{equation}}




\setlength{\headsep}{8pt}
\setlength{\footskip}{20pt}

\newcommand{\li}{\mathrm{li}}

\newtheorem{theorem}{Theorem}
\newtheorem{lem}{Lemma}

\newtheorem{corollary}{Corollary}
\newtheorem{Conj}{Conjecture}

\setcounter{lemmaletter}{1}
\newtheorem*{theorem*}{Theorem}
\newtheorem*{conjecture*}{Conjecture}
\definecolor{pink}{rgb}{1,.2,.6}
\definecolor{orange}{rgb}{0.7,0.3,0}
\definecolor{blue}{rgb}{.2,.6,.75}
\definecolor{green}{rgb}{.4,.7,.4}
\definecolor{purple}{RGB}{127,0,255}


\numberwithin{equation}{section}


\begin{document}

\title{The Prime Number Theorem and Pair Correlation of Zeros of the Riemann Zeta-Function}
\author[Goldston]{D. A. Goldston}
\address{Department of Mathematics and Statistics, San Jose State University}
\email{daniel.goldston@sjsu.edu}

\author[Suriajaya]{Ade Irma Suriajaya}
\address{Faculty of Mathematics, Kyushu University}
\email{adeirmasuriajaya@math.kyushu-u.ac.jp}
\keywords{prime numbers, Riemann zeta-function, Prime Number Theorem}
\subjclass[2010]{11M06, 11M26, 11N05}

\date{\today}

\begin{abstract}
We prove that the error in the prime number theorem can be quantitatively improved beyond the Riemann Hypothesis bound by using versions of Montgomery's conjecture for the pair correlation of zeros of the Riemann zeta-function which are uniform in long ranges and with suitable error terms. 
\end{abstract}

\maketitle

\bigskip

\noindent
{\bf Added Remarks.} It has been brought to our attention that the main results in this paper have already been obtained in \cite{LPZ16}. We use here the function $F_\beta(x,T)$ defined in \eqref{Fbeta}, while in \cite{LPZ16} the authors use the same function with a change of variable $\beta = 1/\tau$. Their paper also has applications to other problems concerning primes. For further applications of extended pair correlation conjectures, we direct interested readers to the papers \cite{LPZ12}, \cite{LPZ16}, and \cite{LPZ17} of Languasco, Perelli, and Zaccagnini.

\bigskip

\section{Introduction}
Let $\pi(x)$ denote the number of primes less than or equal to $x$, and define $\mathcal{P}(x)$ by
\begin{equation} \label{piPNT} \pi(x) = \li(x) + \mathcal{P}(x), \qquad \text{where} \quad \li(x) := \int_2^x \frac{dt}{\log t}. \end{equation}
The prime number theorem is the assertion that $\mathcal{P}(x) = o(\li(x)) =o(\frac{x}{\log x})$ as $x\to \infty$, and we refer to $\mathcal{P}(x)$ as the error in the prime number theorem. The connection of $\mathcal{P}(x)$ with the zeros of the Riemann zeta-function is made most easily by using the von Mangoldt function $\Lambda(n)$. Writing 
\begin{equation} \label{psiPNT} \psi(x) := \sum_{n\le x} \Lambda(n) = x +\mathcal{R}(x), \end{equation}
then the prime number theorem is equivalent to $\mathcal{R}(x) =o(x)$ as $x\to \infty$. This formulation of the prime number theorem is so widely used that it is also called the prime number theorem and $\mathcal{R}(x)$ is also referred to as the error in the prime number theorem. The transition from $\mathcal{R}(x)$ to $\mathcal{P}(x)$ is easily made; here we will assume the Riemann Hypothesis (RH) and use \cite[Theorem 13.2]{MontgomeryVaughan2007}
\begin{equation}\label{RtoP} \mathcal{P}(x) = \frac{\mathcal{R}(x)}{\log x} - \frac{x^{1/2}}{\log x} + O\left(\frac{x^{1/2}}{\log^2 x}\right). \end{equation}

The best unconditional estimate for $\mathcal{R}(x)$ was obtained by Korobov and also Vinogradov in 1958. The Riemann Hypothesis is equivalent to the estimate 
\begin{equation} \label{PNTequivRH} \mathcal{R}(x) \ll x^{1/2+\epsilon}, \qquad \text{for any $\epsilon >0$.}\end{equation} 
The best known estimate for the error in the prime number theorem assuming the RH is 
due to von Koch \cite{vonKoch1901}, who in 1901 proved 
\begin{equation} \label{Koch} \mathcal{R}(x) = O\left( x^{1/2}\log^2 x\right), \end{equation}
and thus $\mathcal{P}(x) = O\left( x^{1/2}\log x\right)$.
On the other hand, Schmidt \cite{Schmidt1903} in 1903 proved
\begin{equation} \label{Schmidt} \mathcal{R}(x) = \Omega_{\pm}( x^{1/2}). \end{equation}
This last result is unconditional since if RH is false an even stronger result is true. At this point a surprising difficulty arises since when substituting \eqref{Schmidt} into \eqref{RtoP} the
constant obtained by Schmidt's method is too small to imply that there exists any value of $x$ for which $\mathcal{P}(x)>0$, see \cite[Footnote p. 93]{Ingham1932}, and this leaves open the question of whether $\li(x) > \pi(x) $ is always true.\footnote{ The definition we use here for $\li(x)$ in \eqref{piPNT} has $\li(x) < \pi(x)$ for $x=2,3,5,7$ but these exceptions do not occur for the usual definition of $\li(x)= \int_0^x dt/\log t$ used elsewhere in mathematics.} This question was answered by Littlewood in 1914, who proved
\begin{equation} \label{Little} \mathcal{R}(x) = \Omega_{\pm}( x^{1/2}\log \log\log x) \qquad \text{and}\quad \mathcal{P}(x) = \Omega_{\pm}(\frac{ x^{1/2}}{\log x}\log \log\log x).\end{equation}
During the last hundred years there has been no improvement in von Koch's upper bound \eqref{Koch} and Littlewood's lower bound \eqref{Little}.

As for the actual size of the error in the prime number theorem, on probability grounds it has been conjectured \cite[p. 484]{MontgomeryVaughan2007} that 
\begin{equation} \label{guess}\limsup_{x\to \infty}\frac{\mathcal{R}(x)}{x^{1/2} (\log\log\log x)^2}= \frac{1}{2\pi}, \qquad \liminf_{x\to \infty}\frac{\mathcal{R}(x)}{x^{1/2} (\log\log\log x)^2}= -\frac{1}{2\pi}.\end{equation}

In this paper we will improve on the RH bound \eqref{Koch} by assuming in addition conjectures related to the pair correlation for zeros of the Riemann zeta-function. The first result of this type is due to Gallagher and Mueller \cite{GaMu78} in 1978, who proved that 
\begin{equation} \label{RlittleO} \mathcal{R}(x) = o(x^{1/2}\log^2 x), \end{equation} 
subject to the RH and the conjecture that there exists a pair correlation density function. 
Using an extension of this pair correlation density function conjecture and RH, Mueller \cite{Mueller76} proved that 
\begin{equation} \int_0^X(\pi(x+\lambda\log X) -\pi(x) )^2 \, dx \sim (\lambda +\lambda^2)X,\end{equation}
a result later obtained in a different way in \cite{GoldMont}. 
In unpublished work Gallagher and Mueller were able to use this extended pair correlation density conjecture with stronger error terms to prove 
\begin{equation} \label{Rstronger} \mathcal{R}(x) = O(x^{1/2}(\log\log x)^2); \end{equation} 
however Mueller showed the conjecture is sometimes false with such strong error terms. Assuming only RH, Gallagher \cite{Gallagher80} proved that \eqref{Rstronger} holds except possibly on a set of finite logarithmic measure. 

We use here a related method introduced by Heath-Brown \cite{Heath-Brown} in 1981. For $x>0$, $T\ge 3$, and $\beta \ge 1$, define
\begin{equation} \label{Fbeta} F_\beta(x,T) := \sum_{0<\gamma,\gamma'\le T} x^{i(\gamma -\gamma')} w_\beta(\gamma -\gamma') , \qquad w_\beta(u) = \frac{4\beta^2}{4\beta^2+u^2},\end{equation} 
where the sum is over the imaginary parts $\gamma$ and $\gamma'$ of zeta-function zeros. This is a generalization of Montgomery's function $F(x,T)$ \cite{Montgomery72}, which is the case $\beta=1$ and we have $F(x,T) =F_1(x,T)$.
Heath-Brown proved \eqref{RlittleO} by assuming RH together with the conjecture that $F(x,T) = o(T\log^2T)$ holds uniformly for $T\le x \le T^M$ for any fixed number $M$. The function $F_\beta(x,T)$ was introduced by Goldston and Heath-Brown \cite{GoldH-B84} in 1984, and we will use it as the main tool in this paper.

Recently we obtained a quantitative improvement in the bound for $\mathcal{R}(x)$ as an application of a method related to a formula of Fujii \cite{GS21}. The main result we obtained is that, assuming RH and
\begin{equation} \label{Fbound} F(x,T)\ll T\log x \qquad \text{ holds uniformly for } T\le x\le T^{\log T}, \end{equation}
then this implies for $1\le h\le x$ that
\begin{equation} \label{J(x,h)} J(x,h) := \int_0^x (\psi(t+h)-\psi(t) -h)^2\, dt \ll hx\log x, \end{equation}
and this in turn implies 
\begin{equation}\label{Fujiiresult} \mathcal{R}(x) \ll x^{1/2}(\log x)^{3/2}. \end{equation}

Our approach here is to work directly with $\mathcal{R}(x)$ via its explicit formula. As we will show, conjectured bounds for $F_\beta(x,T)$ provides good upper bounds for $\mathcal{R}(x)$, and in turn conjectured asymptotic formulas for $F(x,T)$ with suitable error terms imply these bounds on $F_\beta(x,T)$. There is probably no hope of proving any of these conjectures at present, but they demonstrate that we can obtain significant improvements over \eqref{Koch} from upper bounds for exponential sums over zeros which are only powers of logarithms smaller than trivial, rather than the square root of the main term bounds needed in other problems concerning primes.

\section{Statement of Results} 

We note that $F_\beta(x,T)\ge 0$ by \eqref{FbetaIntegral} below, 
and also the trivial bound 
\begin{equation} \label{trivial} F_\beta(x,T) \ll \beta T \log^2T. \end{equation}
Anything stronger than this bound over an appropriate range will improve on \eqref{Koch}. 
\begin{Conj} \label{conj1} For $T\ge 3$, let $\mathcal{L}(T)$ and $\beta = \beta(T)$ be two non-decreasing continuous functions satisfying $1\le \beta(T) \ll \log^3T$ and $\log T \ll \mathcal{L}(T) \ll \beta(T) \log^2 T$. If $W=W(x)$ is an increasing function and $W(x)\gg (\log x)^A$ for a large constant $A$, then 
\begin{equation} \label{Conjecture1} F_\beta(x,T) \ll T \mathcal{L}(T) \qquad \text{uniformly for} \quad W(x)\ll T \ll x^{1/2}\log^2x.
\end{equation}
\end{Conj}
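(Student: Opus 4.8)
Because the statement is a conjecture about the pair correlation of the zeros, there is no prospect of an unconditional proof; what one can realistically aim for—and what the closing sentence of the introduction points to—is to \emph{derive} the bound \eqref{Conjecture1} on the two-parameter family $F_\beta(x,T)$ from a Montgomery-type asymptotic for the single function $F(x,T)=F_1(x,T)$ equipped with a suitable uniform error term. So the plan is to reduce a statement about $F_\beta$ to one about $F$ alone, working under RH as in the rest of the paper.

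The starting point is the integral representation \eqref{FbetaIntegral}. Using $w_\beta(u)=\beta\int_{-\infty}^\infty e^{-2\beta|s|}e^{ius}\,ds$ and interchanging sum and integral, one writes $F_\beta(x,T)=\beta\int_{-\infty}^\infty e^{-2\beta|s|}\big|\sum_{0<\gamma\le T}(xe^{s})^{i\gamma}\big|^2\,ds$, which exhibits $F_\beta(x,T)$ as a probability-weighted average (the kernel $\beta e^{-2\beta|s|}$ has mass $1$) of the pointwise quantity $G(y,T):=|\sum_{0<\gamma\le T}y^{i\gamma}|^2$ sampled at $y=xe^{s}$; the case $\beta=1$ realizes $F(x,T)$ as the wider average of $G$ against $e^{-2|s|}$. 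First I would convert the narrow average into the wide one: comparing the Fourier transforms of the two kernels gives the exact identity
\begin{equation*}
F_\beta(x,T)=\beta^2 F(x,T)-\beta(\beta^2-1)\int_{-\infty}^\infty e^{-2\beta|u|}F(xe^{u},T)\,du ,
\end{equation*}
valid for every $\beta\ge 1$ and reducing to $F=F_1$ when $\beta=1$. This identity is the crux, since it expresses $F_\beta$ entirely through values of $F$.

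I would then insert the assumed asymptotic $F(y,T)=\tfrac{T}{2\pi}\log T+E(y,T)$, uniform for $y$ in the relevant range. The two main terms are engineered to cancel: since $\int e^{-2\beta|u|}\,du=1/\beta$, the coefficient combination $\beta^2-\beta(\beta^2-1)\cdot\tfrac1\beta=1$ leaves exactly $\tfrac{T}{2\pi}\log T$, which is $\ll T\mathcal{L}(T)$ because $\mathcal{L}(T)\gg\log T$. The tail of the $u$-integral, where $y=xe^{u}$ leaves the conjectured range, is harmless: the hypothesis $T\ll x^{1/2}\log^2x$ forces $x\gg T^{2-o(1)}$, so $y$ drops below $T$ only for $u<-\log(x/T)\le-(1-o(1))\log T$, where the weight $e^{-2\beta|u|}\le T^{-2\beta}$ annihilates even the trivial bound \eqref{trivial}.

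The hard part will be the error contribution. Writing $E(xe^{u},T)=E(x,T)+\big(E(xe^{u},T)-E(x,T)\big)$ and applying the same coefficient cancellation shows that the error term of $F_\beta$ equals
\begin{equation*}
E(x,T)-\beta(\beta^2-1)\int_{-\infty}^\infty e^{-2\beta|u|}\big(E(xe^{u},T)-E(x,T)\big)\,du .
\end{equation*}
The first piece is $\ll\sup|E|$; the danger is the second. Bounding the increments trivially by $2\sup|E|$ reinstates the full factor $\beta^2$, whereas a Lipschitz-type bound $|E(xe^{u},T)-E(x,T)|\ll V|u|$ reduces this to $\sim\beta V$ (the prefactor $\beta(\beta^2-1)\sim\beta^3$ against $\int e^{-2\beta|u|}|u|\,du=\tfrac{1}{2\beta^2}$). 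Either way, the crux is a quantitative control of how smoothly the error in the $F$-asymptotic varies in the $\log$-variable across the short window $|u|\lesssim 1/\beta$. It is precisely here that the admissibility constraints $1\le\beta\ll\log^3T$ and $\log T\ll\mathcal{L}(T)\ll\beta\log^2T$ must be balanced against the quality of the assumed error term so that both pieces stay $\ll T\mathcal{L}(T)$; matching the uniformity range of the $F$-asymptotic, in both $y$ and $T$, to the window $W(x)\ll T\ll x^{1/2}\log^2x$ via the lower bound $W(x)\gg(\log x)^A$ is the remaining bookkeeping.
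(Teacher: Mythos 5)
Since the statement is a conjecture, the paper never proves it outright but only derives it conditionally --- from \Cref{conj2} via \Cref{thm2}, and from the Ford--Soundararajan--Zaharescu asymptotic via \Cref{cor4} --- and your proposal follows essentially that same route: your exact identity expressing $F_\beta$ through $F$ is precisely \eqref{lemma1eq} of \Cref{Lemma2} rearranged using $\int_{-\infty}^{\infty} e^{-2\beta|u|}\,du = 1/\beta$, and your main-term cancellation together with the reduction to controlling $F(xe^{u},T)-F(x,T)$ over a short window in $u$ (with the increment needing to be $\ll T\mathcal{L}(T)/\beta^{2}$ to beat the prefactor $\beta^{2}$) is exactly the content of \Cref{conj2} and the proof of \Cref{thm2}. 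The only divergence is bookkeeping in the tail: the paper truncates the $u$-integral at $V=\beta^{-1}\log(\beta\log T)$, so that $e^{\pm V}$ stays in $[1/(2\log T),\,2\log T]$ and the trivial bound \eqref{trivial} disposes of $|u|>V$, whereas you truncate where $xe^{u}$ exits the conjectured range.
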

Define
\begin{equation} \label{M(x)} \mathcal{M}(x) := \sum_{1\le k\ll \log x} \sqrt{ \frac{\mathcal{L}(2^{k})}{\beta(2^k)}}. \end{equation}

\begin{theorem} \label{thm1} Assuming the Riemann Hypothesis and Conjecture 1, we have 
\begin{equation} \label{thm1eq} \mathcal{R}(x) \ll x^{1/2} \left(\mathcal{M}(x)+ (\log W(x))^2\right). \end{equation}
\end{theorem}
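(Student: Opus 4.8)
The plan is to start from the explicit formula for $\psi(x)$ in \eqref{psiPNT} under RH. Truncating at height $T\asymp x^{1/2}\log^2 x$ and pairing each zero $\rho=\tfrac12+i\gamma$ with its conjugate yields
\begin{equation*}
\mathcal{R}(x) = -2x^{1/2}\,\mathrm{Re}\sum_{0<\gamma\le T}\frac{x^{i\gamma}}{\tfrac12+i\gamma} + O\!\left(x^{1/2}\right),
\end{equation*}
where the error is the standard truncation error $O(x\log^2 x/T)$, already within the target at this $T$. Everything thus reduces to showing that $\Sigma:=\sum_{0<\gamma\le T} x^{i\gamma}/(\tfrac12+i\gamma)$ is $\ll \mathcal{M}(x)+(\log W(x))^2$. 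The portion with $0<\gamma\le W(x)$ I would bound trivially: since $|\tfrac12+i\gamma|^{-1}\ll\gamma^{-1}$ and the number of zeros up to height $t$ is $\sim\frac{t}{2\pi}\log t$, partial summation gives $\sum_{0<\gamma\le W}\gamma^{-1}\ll(\log W)^2$, producing exactly the $(\log W(x))^2$ term. This is important precisely because Conjecture 1 is unavailable here (it requires $T\gg W(x)$), so the trivial bound must already suffice on this range.

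For $W(x)<\gamma\le T$ I would decompose dyadically, writing $\Sigma=\sum_k\Sigma_k$ with $\Sigma_k=\sum_{T_{k-1}<\gamma\le T_k} x^{i\gamma}/(\tfrac12+i\gamma)$ and $T_k=2^k$, where $k$ ranges over $\log_2 W(x)\lesssim k\lesssim\tfrac12\log_2 x\ll\log x$, consistent with \eqref{M(x)}. The goal for each block is
\begin{equation*}
|\Sigma_k|\ll\sqrt{\mathcal{L}(T_k)/\beta(T_k)},
\end{equation*}
after which summing over $k$ and comparing with \eqref{M(x)} gives a contribution $\ll\mathcal{M}(x)$ (extending the sum down to $k=1$ only enlarges it, so this is harmless for an upper bound).

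The heart of the matter is the per-block bound, and this is where \eqref{Conjecture1} enters with $\beta=\beta(T_k)$. Set $v_\gamma=x^{i\gamma}$ and $a_\gamma=(\tfrac12+i\gamma)^{-1}$ on the block and $0$ off it, so $\Sigma_k=\sum_\gamma a_\gamma v_\gamma$ and $F_\beta(x,T_k)=\sum_{\gamma,\gamma'}w_\beta(\gamma-\gamma')v_\gamma\overline{v_{\gamma'}}$. Because $w_\beta$ is a positive-definite kernel — this is exactly the content of \eqref{FbetaIntegral} — the form $Q(b):=\sum_{\gamma,\gamma'}w_\beta(\gamma-\gamma')b_\gamma\overline{b_{\gamma'}}$ is an inner product, and for any trial sequence $(c_\gamma)$ the generalized Cauchy–Schwarz inequality gives
\begin{equation*}
|\Sigma_k|\le F_\beta(x,T_k)^{1/2}\,Q(c)^{1/2} + \Big|\sum_\gamma\big(a_\gamma-(Wc)_\gamma\big)v_\gamma\Big|,\qquad (Wc)_\gamma:=\sum_{\gamma'}w_\beta(\gamma-\gamma')c_{\gamma'}.
\end{equation*}
The natural choice is the ``deconvolution'' $c_\gamma\approx a_\gamma/V(\gamma)$, where $V(\gamma)=\sum_{\gamma'}w_\beta(\gamma-\gamma')\asymp\beta(T_k)\log T_k$ is the local mass of the weight, chosen so that $Wc\approx a$. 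A short computation using $|a_\gamma|\asymp T_k^{-1}$ and the fact that the block contains $\asymp T_k\log T_k$ zeros gives $Q(c)\asymp(\beta(T_k)T_k)^{-1}$; combined with $F_\beta(x,T_k)\ll T_k\mathcal{L}(T_k)$ this makes the main term $F_\beta(x,T_k)^{1/2}Q(c)^{1/2}\ll\sqrt{\mathcal{L}(T_k)/\beta(T_k)}$, as required.

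The main obstacle is the residual term $\sum_\gamma(a_\gamma-(Wc)_\gamma)v_\gamma$, i.e. making the deconvolution $Wc=a$ precise; the difficulties are concentrated at the two endpoints of the block, where $V(\gamma)$ ceases to be essentially constant and the convolution $Wc$ no longer reproduces $a$, and one must also justify replacing $|\tfrac12+i\gamma|$ by $\gamma$ and $V(\gamma)$ by its average. I expect these edge effects to be tamed either by solving the deconvolution equation more carefully (smoothing the cutoffs at $T_{k-1},T_k$) or by slightly overlapping the blocks so that the boundary contributions telescope and remain dominated by $\sum_k\sqrt{\mathcal{L}(T_k)/\beta(T_k)}=\mathcal{M}(x)$. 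Granting that the residual is of the same order as the main term, the per-block bound follows, and assembling the trivial range with the dyadic ranges gives $\mathcal{R}(x)\ll x^{1/2}(\mathcal{M}(x)+(\log W(x))^2)$, which is \eqref{thm1eq}.
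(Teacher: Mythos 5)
Your outer skeleton coincides with the paper's: truncated explicit formula at $Y\asymp x^{1/2}\log^2x$, trivial bound $\sum_{0<\gamma\le W}\gamma^{-1}\ll(\log W)^2$ on the low range, dyadic blocks on $(W,Y]$, and a per-block target of $\sqrt{\mathcal{L}(2^k)/\beta(2^k)}$ summing to $\mathcal{M}(x)$. But the per-block estimate is the entire content of the theorem, and your proposed route to it --- duality against the positive-definite kernel $w_\beta$ with the deconvolution choice $c_\gamma\approx a_\gamma/V(\gamma)$, $V(\gamma)=\sum_{\gamma'}w_\beta(\gamma-\gamma')$ --- is left with a residual term $\sum_\gamma(a_\gamma-(Wc)_\gamma)v_\gamma$ that you only ``grant'' to be of the right size. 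This is not an edge effect; it is a genuine gap. Even in the interior of a block, $(Wc)_\gamma=a_\gamma$ requires the local mass $V(\gamma)$ to equal its mean $\asymp\beta\log\gamma$ to within relative error about $1/\log^2 T_k$ (since a trivial bound on the residual costs a factor $\sum_\gamma|a_\gamma|\asymp\log T_k$, and the target can be as small as $\sqrt{\log T/\beta}\asymp1/\log T$ when $\beta\asymp\log^3T$). Writing $V(\gamma)=\int w_\beta(\gamma-t)\,dN(t)$ and integrating by parts against $S(t)$ shows the fluctuation of $V$ is $O(\sup|S|)=O(\log T/\log\log T)$ on RH, i.e.\ relative error $O(1/(\beta\log\log T))$ --- far too large. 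So the residual cannot be bounded trivially, and exploiting cancellation in $v_\gamma=x^{i\gamma}$ there just reproduces the original problem of bounding an exponential sum over zeros. No choice of discrete deconvolution weights is known to fix this.

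The paper's Lemma \ref{Lemma1} is, in effect, the correct rigorous implementation of your deconvolution idea, carried out on the continuous $u$-side instead of the discrete zero side: the Gallagher--Sobolev inequality bounds $\bigl|\sum_{s<\gamma\le t}x^{i\gamma}\bigr|^2=|f(0)|^2$ by the average of $\bigl|\sum x^{i\gamma}e^{i\gamma u}\bigr|^2$ over $|u|\le1/\beta$ (which is exactly $F_\beta/\beta$ up to the splitting $2F_\beta(x,s)+2F_\beta(x,t)$) plus a derivative term; the derivative brings down a factor $\gamma\le t$ which is removed by partial summation in $\gamma$, yielding $\bigl|\sum_{s<\gamma\le t}x^{i\gamma}\bigr|\ll\sqrt{(t/\beta)\,\mathcal{F}_\beta(x,t,s)}$ with no residual to control. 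The paper also removes the weight $1/\gamma$ by partial summation \emph{before} invoking the lemma, rather than carrying $a_\gamma=(\tfrac12+i\gamma)^{-1}$ inside the bilinear form. To complete your argument you should replace the deconvolution step by this lemma (or prove a discrete analogue, which I do not believe is feasible with current knowledge of the local distribution of the $\gamma$'s).
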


If in Theorem \ref{thm1} we use $\beta=1$ and the trivial bound \eqref{trivial}, we get the RH bound \eqref{Koch}.
The strongest results are obtained by taking $\mathcal{L}(T) = \log T$, from which we obtain the following corollaries. 
\begin{corollary} \label{cor1} Assume the Riemann Hypothesis. For $\beta=(\log T)^{3-2a}$ with fixed $0<a \le 3/2$, if 
\begin{equation} \label{cor1assume} F_{\beta}(x,T) \ll T\log T \qquad \text{ uniformly for} \quad \frac{T^2}{\log^4T}\ll x \ll T^{(\log T)^{(2-a)/a}}, \end{equation} then this implies
\begin{equation} \label{cor1eq}\mathcal{R}(x) \ll x^{1/2}\log^a x. \end{equation}
\end{corollary}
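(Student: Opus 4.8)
The plan is to derive Corollary~\ref{cor1} from Theorem~\ref{thm1} by making the specific choices $\mathcal{L}(T) = \log T$ and $\beta(T) = (\log T)^{3-2a}$ and then estimating the quantities $\mathcal{M}(x)$ and $(\log W(x))^2$ appearing in \eqref{thm1eq}. First I would verify that these choices are admissible for Conjecture~\ref{conj1}: the constraint $1 \le \beta(T) \ll \log^3 T$ holds since $0 < a \le 3/2$ forces $0 \le 3-2a < 3$, and $\log T \ll \mathcal{L}(T) \ll \beta(T)\log^2 T$ holds because $\mathcal{L}(T) = \log T$ and $\beta(T)\log^2 T = (\log T)^{5-2a} \gg \log T$. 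Then I would match the range of uniformity: the hypothesis \eqref{cor1assume} is stated in the variable $x$ over $T^2/\log^4 T \ll x \ll T^{(\log T)^{(2-a)/a}}$, and I would invert this to express it as a range $W(x) \ll T \ll x^{1/2}\log^2 x$ of the form required by \eqref{Conjecture1}, thereby reading off the function $W(x)$ that Theorem~\ref{thm1} uses.

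The main computation is the evaluation of $\mathcal{M}(x)$. With the chosen functions, each summand in \eqref{M(x)} becomes
\begin{equation}
\sqrt{\frac{\mathcal{L}(2^k)}{\beta(2^k)}} = \sqrt{\frac{k\log 2}{(k\log 2)^{3-2a}}} = (k\log 2)^{(2a-2)/2} \asymp k^{a-1}.
\end{equation}
Summing over $1 \le k \ll \log x$ gives $\mathcal{M}(x) \asymp \sum_{k \le c\log x} k^{a-1}$, and since $a - 1 > -1$ this partial sum of a power behaves like $(\log x)^a$ up to constants (for the borderline behavior one checks that $a>0$ keeps the exponent $a-1 > -1$, so the sum is dominated by its largest terms and is of order $(\log x)^a$). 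Hence $\mathcal{M}(x) \ll \log^a x$, which is exactly the target order in \eqref{cor1eq}.

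It then remains to confirm that the second term $(\log W(x))^2$ in \eqref{thm1eq} is absorbed into $\log^a x$. From the inversion of the range in \eqref{cor1assume}, the relevant $W(x)$ will be comparable to $x^{1/2}/\log^2 x$ at the lower endpoint (coming from $x \gg T^2/\log^4 T$), so $\log W(x) \asymp \log x$ and $(\log W(x))^2 \asymp \log^2 x$. \emph{This is the point that requires care}: for the corollary to give $\mathcal{R}(x) \ll x^{1/2}\log^a x$ we need $\log^2 x \ll \log^a x$, which fails when $a < 2$. The resolution is that the upper endpoint $x \ll T^{(\log T)^{(2-a)/a}}$ of the range in \eqref{cor1assume} is what determines the admissible $W(x)$ for the uniformity statement, and this endpoint is designed precisely so that $W(x)$ may be taken as small as a fixed power of $\log x$ (matching the hypothesis $W(x) \gg (\log x)^A$ in Conjecture~\ref{conj1}), making $(\log W(x))^2 \asymp (\log\log x)^2 = o(\log^a x)$.

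Thus the heart of the argument is bookkeeping: translating the $x$-range of \eqref{cor1assume} into the $T$-range of Conjecture~\ref{conj1} and correctly identifying $W(x)$. The hard part will be handling the range conversion so that the $(\log W(x))^2$ term genuinely stays below $\log^a x$; once the relationship between the exponent $(\log T)^{(2-a)/a}$ in the upper endpoint and the allowable size of $W(x)$ is pinned down, the evaluation $\mathcal{M}(x) \ll \log^a x$ and the substitution into \eqref{thm1eq} complete the proof.
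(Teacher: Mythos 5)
Your evaluation of $\mathcal{M}(x)$ is correct and is exactly the paper's computation: with $\mathcal{L}(T)=\log T$ and $\beta(T)=(\log T)^{3-2a}$ each summand is $\asymp k^{a-1}$ and the sum is $\ll \log^a x$. But the range conversion --- which you yourself flag as the heart of the matter --- is handled incorrectly, in two ways. First, you have the endpoints backwards: the lower endpoint $x\gg T^2/\log^4 T$ of \eqref{cor1assume} corresponds to the \emph{upper} endpoint $T\ll x^{1/2}\log^2 x$ of \eqref{Conjecture1} (set $x=T^2/\log^4T$ and solve for $T$), while it is the upper endpoint $x\ll T^{(\log T)^{(2-a)/a}}$ that determines $W(x)$. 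Second, your proposed ``resolution'' --- that this upper endpoint lets $W(x)$ be a fixed power of $\log x$, so that $(\log W(x))^2\asymp(\log\log x)^2$ --- is false. Taking logarithms in $x\ll T^{(\log T)^{(2-a)/a}}$ gives $\log x\ll(\log T)^{2/a}$, i.e.\ $T\gg e^{(\log x)^{a/2}}$, so the correct identification is $W(x)=e^{(\log x)^{a/2}}$, which is the choice the paper makes. A choice $W(x)=(\log x)^A$ would require uniformity over the vastly longer range $x\ll T^{T^{1/A}}$ appearing in Corollary~\ref{cor3}, not the range hypothesized in \eqref{cor1assume}.

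With the correct $W$, the term $(\log W(x))^2=\bigl((\log x)^{a/2}\bigr)^2=\log^a x$ is not $o(\log^a x)$ but is exactly of the target order, so \eqref{thm1eq} still yields $\mathcal{R}(x)\ll x^{1/2}\log^a x$ --- the two terms $\mathcal{M}(x)$ and $(\log W(x))^2$ are in fact balanced by design. So the corollary does follow, but your stated justification for absorbing the $(\log W)^2$ term does not work as written and needs to be replaced by this correct inversion of the range.
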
 
For the case $a=3/2$ and thus $\beta=1$ in \Cref{cor1}, we can weaken \eqref{cor1assume} slightly.
\begin{corollary} \label{cor2}
Assuming the Riemann Hypothesis. If
\begin{equation} \label{cor2assume} F(x,T) \ll T\log x \qquad \text{ uniformly for} \quad \frac{T^2}{\log^4T}\ll x \ll T^{(\log T)^{1/3}}, \end{equation}
then we have
\begin{equation} \mathcal{R}(x) \ll x^{1/2}(\log x)^{3/2}. \end{equation}
\end{corollary}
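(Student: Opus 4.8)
The plan is to obtain \Cref{cor2} from \Cref{thm1} by running the $a=3/2$ case of \Cref{cor1} with the single change that, for a fixed $x$, the bound \eqref{cor2assume} carries the \emph{constant} weight $\log x$ rather than the growing weight $\log T$. First I would rephrase \eqref{cor2assume} as a statement about $T$ with $x$ held fixed. The condition $T^2/\log^4 T\ll x$ rearranges to $T\ll x^{1/2}\log^2 T$, hence (as this forces $T\ll x$) to $T\ll x^{1/2}\log^2 x$; and $x\ll T^{(\log T)^{1/3}}$ rearranges to $\log x\ll(\log T)^{4/3}$, i.e.\ $\log T\gg(\log x)^{3/4}$. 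Thus \eqref{cor2assume} says exactly that $F(x,T)\ll T\log x$ on the range $W(x)\ll T\ll x^{1/2}\log^2 x$ with
\[ W(x)=\exp\!\big(c(\log x)^{3/4}\big) \]
for a suitable $c>0$. Since $\log W(x)=c(\log x)^{3/4}$ dominates $A\log\log x$ for every $A$, we have $W(x)\gg(\log x)^A$, so this $W$ is admissible in \Cref{thm1}.

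Next I would apply \Cref{thm1} with $\beta\equiv 1$ and $\mathcal{L}(T)=\log x$, viewed as a (constant-in-$T$) weight on the operative range. The hypotheses of Conjecture~\ref{conj1} hold there: $\beta=1$ is admissible, and $\log T\ll\log x\ll\log^2 T$ because $T\ll x^{1/2}\log^2 x\ll x$ forces $\log T\le(\tfrac12+o(1))\log x$ on the left, while $T\gg W(x)$ gives $\log^2 T\gg(\log x)^{3/2}\gg\log x$ on the right. With these choices \eqref{cor2assume} is precisely the bound \eqref{Conjecture1}.

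It then remains to evaluate the two terms in \eqref{thm1eq}. By \eqref{M(x)}, $\mathcal{M}(x)=\sum_{1\le k\ll\log x}\sqrt{\log x}\asymp(\log x)\cdot(\log x)^{1/2}=(\log x)^{3/2}$, the number of dyadic scales being $\asymp\log x$; and $(\log W(x))^2=c^2(\log x)^{3/2}$. Feeding both into \eqref{thm1eq} gives $\mathcal{R}(x)\ll x^{1/2}(\log x)^{3/2}$. The mechanism is identical to the true $a=3/2$ case of \Cref{cor1}, where $\mathcal{L}(T)=\log T$ yields per-scale weight $\sqrt{\log 2^k}\asymp\sqrt{k}$ and $\mathcal{M}(x)\asymp\sum_{k\ll\log x}\sqrt{k}\asymp(\log x)^{3/2}$: that sum is controlled by the top scales $2^k\asymp x^{1/2}$, where $\log 2^k\asymp\log x$, so inflating the weight to the constant $\log x$ at every scale enlarges only the negligible low scales. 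This is exactly why \eqref{cor2assume} may use $\log x$ in place of $\log T$.

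The hard part is the bookkeeping behind treating $\log x$ as an admissible choice of $\mathcal{L}$: Conjecture~\ref{conj1} is phrased for a function $\mathcal{L}(T)$ satisfying $\log T\ll\mathcal{L}(T)\ll\beta(T)\log^2 T$, and a weight constant in $T$ only respects these inequalities on a bounded range of $T$. What \Cref{thm1} actually needs, however, is the bound \eqref{Conjecture1} on the operative range $W(x)\ll T\ll x^{1/2}\log^2 x$ together with the trivial bound \eqref{trivial} below $W(x)$; on that range $\mathcal{L}=\log x$ (or $\mathcal{L}(T)=\log(xT)$, if one prefers a genuinely increasing weight) does satisfy the required inequalities, as checked above. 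To keep the deduction self-contained I would, rather than quote \Cref{thm1} verbatim, re-run its dyadic decomposition directly: at each scale $W(x)\ll 2^k\ll x^{1/2}\log^2 x$ insert $F(x,2^k)\ll 2^k\log x$ from \eqref{cor2assume}, producing per-scale weight $\sqrt{\log x}$ and total $\asymp(\log x)^{3/2}$, and below $W(x)$ use \eqref{trivial}, producing $(\log W(x))^2\asymp(\log x)^{3/2}$. Either route leaves the arithmetic above untouched; the only content beyond \Cref{cor1} is the remark that a scale-independent weight still sums to $(\log x)^{3/2}$.
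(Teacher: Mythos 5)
Your proposal is correct and follows essentially the same route as the paper: identify the hypothesis range with $W(x)=e^{(\log x)^{3/4}}$ and the upper cutoff $x^{1/2}\log^2x$, replace Conjecture~\ref{conj1} by \eqref{cor2assume} in the dyadic decomposition of Theorem~\ref{thm1} so that each scale contributes $\sqrt{\log x}$, and sum over $\ll\log x$ scales to get $(\log x)^{3/2}$, matching $(\log W(x))^2$. Your extra remark about $\mathcal{L}=\log x$ not literally being an admissible $\mathcal{L}(T)$ is exactly the reason the paper also re-runs the estimate \eqref{neardone} directly rather than quoting Theorem~\ref{thm1} verbatim, so the two arguments coincide.
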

This is the result \eqref{Fujiiresult} obtained in \cite{GS21} with a smaller range of validity for the conjecture.

\begin{corollary} \label{cor3}
Assuming the Riemann Hypothesis. Let $A> 1$ and 
\begin{equation} \label{betalem3} \beta = \frac{\log^3T}{A^4(\log\log 2T)^2}. \end{equation}
If
\begin{equation} \label{lem3assume} F_{\beta}(x,T) \ll T\log T, \qquad \text{ uniformly for} \quad \frac{ T^2}{ \log^4T}\ll x \ll T^{T^{1/A}}, \end{equation} 
then we have
\begin{equation} \label{lem3eq} \mathcal{R}(x) \ll A^2x^{1/2}(\log\log x)^2. \end{equation}
\end{corollary}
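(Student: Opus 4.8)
The plan is to obtain \Cref{cor3} as a direct specialization of \Cref{thm1}. I would apply \Cref{thm1} with the choices $\mathcal{L}(T)=\log T$, the prescribed $\beta(T)=\log^3 T/\big(A^4(\log\log 2T)^2\big)$, and the cutoff $W(x)=(\log x)^A$; the point is that with these choices both terms on the right of \eqref{thm1eq} turn out to have the common size $A^2(\log\log x)^2$, which is exactly \eqref{lem3eq}. Before invoking \Cref{thm1} I must check that the triple $(\mathcal{L},\beta,W)$ is admissible for \Cref{conj1}. The constraints $1\le\beta(T)\ll\log^3T$ and $\log T\ll\mathcal{L}(T)\ll\beta(T)\log^2 T$ reduce, for all sufficiently large $T$, to the elementary inequality $A^4(\log\log 2T)^2\ll\log^4 T$; the monotonicity of $\beta$ and $\mathcal{L}$ likewise holds once $T$ is large (depending on $A$), and $W(x)=(\log x)^A$ meets the requirement of being at least a fixed power of $\log x$.

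The decisive step is to match the two ranges. \Cref{conj1} needs $F_\beta(x,T)\ll T\mathcal{L}(T)=T\log T$ for $W(x)\ll T\ll x^{1/2}\log^2 x$, and this must be supplied by the hypothesis \eqref{lem3assume}, which provides exactly $F_\beta(x,T)\ll T\log T$ on the range $T^2/\log^4 T\ll x\ll T^{T^{1/A}}$. I would check the two endpoints separately. At the upper end the constraints $T\ll x^{1/2}\log^2 x$ and $T^2/\log^4 T\ll x$ cut out the same interval of $T$ up to implied constants, since $\log T\asymp\log x$ when $T\asymp x^{1/2}$. At the lower end I must show that $T\ge W(x)=(\log x)^A$ forces $x\ll T^{T^{1/A}}$; taking logarithms twice, this is equivalent to $\log x\ll T^{1/A}\log T$, and at $T=(\log x)^A$ the right-hand side equals $A\,\log x\,\log\log x\gg\log x$, after which monotonicity of $T^{1/A}\log T$ in $T$ propagates the inequality to all larger $T$. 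Hence \eqref{lem3assume} feeds \eqref{Conjecture1} and \Cref{thm1} applies.

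It then remains to evaluate \eqref{thm1eq}. The second term is immediate: $(\log W(x))^2=(A\log\log x)^2=A^2(\log\log x)^2$. For the first, inserting $\mathcal{L}(2^k)=k\log 2$ and $\beta(2^k)=(k\log 2)^3/\big(A^4(\log\log 2^{k+1})^2\big)$ into \eqref{M(x)} gives
\begin{equation*} \sqrt{\frac{\mathcal{L}(2^k)}{\beta(2^k)}}=\frac{A^2\log\log 2^{k+1}}{\log 2^k}\ll\frac{A^2\log(k+2)}{k}, \end{equation*}
so that $\mathcal{M}(x)\ll A^2\sum_{1\le k\ll\log x}\frac{\log(k+2)}{k}\ll A^2(\log\log x)^2$, using $\sum_{k\le N}(\log k)/k\sim\tfrac12\log^2 N$ with $N\asymp\log x$. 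Combining the two estimates in \eqref{thm1eq} yields $\mathcal{R}(x)\ll A^2x^{1/2}(\log\log x)^2$, as claimed.

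The main obstacle is the range verification of the second paragraph rather than any single calculation. The whole strength of \Cref{cor3} rests on the fact that the vast hypothesis range $x\ll T^{T^{1/A}}$ permits pushing the cutoff $W(x)$ all the way down to $(\log x)^A$; one must confirm both that this cutoff is still admissible in \Cref{conj1} and that it is calibrated so precisely that $(\log W(x))^2$ and $\mathcal{M}(x)$ share the same order $A^2(\log\log x)^2$. Once the parameters $\beta,\mathcal{L},W$ are pinned down in this way, the remaining ingredients—the parameter inequalities and the harmonic-type summation for $\mathcal{M}(x)$—are routine.
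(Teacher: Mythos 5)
Your proposal is correct and follows essentially the same route as the paper: specialize \Cref{thm1} with $\mathcal{L}(T)=\log T$, the prescribed $\beta(T)$, and $W(x)=(\log x)^A$, observe that $\sqrt{\mathcal{L}(2^k)/\beta(2^k)}\ll A^2\log(2k)/k$ so that $\mathcal{M}(x)\ll A^2(\log\log x)^2$, and note that $(\log W(x))^2$ has the same order. The paper's proof is terser and leaves the admissibility and range-matching checks implicit, whereas you carry them out explicitly and correctly.
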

This is a version of Gallagher and Mueller's \eqref{Rstronger}. We have no evidence to support the conjecture \eqref{lem3assume} in such a long range of validity for $x$.

\medskip
The parameter $\beta$ is very useful in obtaining these results, but following \cite{FSZ09}, we show that it is possible to eliminate this parameter and formulate our results entirely in terms of conjectures on $F(x,T)$.

\begin{Conj} \label{conj2} For $T\ge 3$, let $\mathcal{L}(T)$ and $\beta = \beta(T)$ be two non-decreasing continuous functions satisfying $1\le \beta(T) \ll \log^3T$ and $\log T \ll \mathcal{L}(T) \ll \beta(T) \log^2 T$. If $W=W(x)$ is an increasing function and $W(x)\gg (\log x)^A$ for a large constant $A$, then 
\begin{equation} \label{Conjecture2b} \max_{ \frac{1}{2\log T} \le v \le 2\log T} \left|F(xv,T)-F(x,T)\right| \ll \frac{T\mathcal{L}(T)}{\beta(T)^2} \qquad \text{uniformly for} \quad W(x)\ll T \ll x^{1/2}\log^2x.
\end{equation}
\end{Conj}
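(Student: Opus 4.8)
The plan is to read \eqref{Conjecture2b} as a quantitative near-constancy statement for the map $y\mapsto F(y,T)$ and to derive it from a version of Montgomery's pair correlation conjecture carrying an explicit error term of exactly the size on the right-hand side, uniform in the stated range. The natural starting point is the nonnegative representation underlying \eqref{Fbeta},
\begin{equation*}
F_\beta(y,T)=\int_{-\infty}^{\infty}\beta e^{-2\beta|s|}\,|S(ye^{s},T)|^{2}\,ds,\qquad S(z,T):=\sum_{0<\gamma\le T}z^{i\gamma},
\end{equation*}
which follows from $w_\beta(u)=\int_{-\infty}^{\infty}\beta e^{-2\beta|s|}e^{ius}\,ds$ and at $\beta=1$ reads $F(y,T)=\int_{-\infty}^{\infty}e^{-2|s|}|S(ye^{s},T)|^{2}\,ds$. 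First I would note that $W(x)\ll T\ll x^{1/2}\log^{2}x$ forces $x\gg T^{2}/\log^{4}x$, so we operate well above the transition point $y=T$ of Montgomery's function, where the conjectured asymptotic has the flat, $y$-independent main term $F(y,T)=\tfrac{T}{2\pi}\log T+\mathcal{E}(y,T)$. Since the main term does not depend on $y$, the quantity in \eqref{Conjecture2b} collapses to a difference of error terms, $F(xv,T)-F(x,T)=\mathcal{E}(xv,T)-\mathcal{E}(x,T)$, and the target becomes the claim that $\mathcal{E}(\cdot,T)$ has increments $\ll T\mathcal{L}(T)/\beta(T)^{2}$ across the window $\tfrac{1}{2\log T}\le v\le 2\log T$.

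The key steps would then be: (i) feed the asymptotic into the representation and write the increment as $\int_{-\infty}^{\infty}e^{-2|s|}\big(|S(xve^{s},T)|^{2}-|S(xe^{s},T)|^{2}\big)\,ds$, controlling the increment of the smoothed square through the polarization $|S(b)|^{2}-|S(a)|^{2}=2\,\mathrm{Re}\,\overline{(S(b)-S(a))}\,S(b)-|S(b)-S(a)|^{2}$ and Cauchy--Schwarz against mean values of $\int|S(y,T)|^{2}$-type integrals; (ii) encode the required $\beta^{-2}$ saving as the hypothesis that the pair-correlation error $\mathcal{E}$ be controlled down to the scale $T\mathcal{L}(T)/\beta(T)^{2}$, uniformly in $x$ and in the window; and (iii) record the base size bound $F(y,T)\ll T\mathcal{L}(T)$, immediate from the main term $\tfrac{T}{2\pi}\log T$ and $\mathcal{L}(T)\gg\log T$. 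The continuity and monotonicity of $\beta(T),\mathcal{L}(T),W(x)$ are used only to make the uniform range in \eqref{Conjecture2b} unambiguous.

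The hard part is step (ii): producing the $\beta^{-2}$ saving uniformly across a window of multiplicative width $\asymp\log^{2}T$ and over a long range of $T$ relative to $x$. For $\beta$ as large as $\log^{3}T$ this demands control of the pair-correlation error down to $\ll T/\log^{O(1)}T$, far beyond any unconditional method and, in view of Mueller's counterexample to the strongest error terms recalled in the introduction, not to be expected for all admissible $\beta$ simultaneously; I therefore do not anticipate an unconditional proof. The realistic content is to isolate exactly the Montgomery-type input from which \eqref{Conjecture2b} follows and to verify its internal consistency through the exact identity
\begin{equation*}
F_\beta(x,T)-F(x,T)=(\beta^{2}-1)\int_{-\infty}^{\infty}\beta e^{-2\beta|s|}\big(F(x,T)-F(xe^{s},T)\big)\,ds,
\end{equation*}
whose kernel concentrates on $|s|\ll 1/\beta$ (so that $e^{s}$ lies in the window), together with the positive-kernel relation $e^{-2|\cdot|}=\tilde K*(\beta e^{-2\beta|\cdot|})$ with $\tilde K=\tfrac{1}{\beta^{2}}\delta+\tfrac{\beta^{2}-1}{\beta^{2}}e^{-2|\cdot|}\ge0$ a probability density. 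Together these show that the increment bound of \eqref{Conjecture2b} is precisely calibrated so that $F_\beta$ stays on the scale $T\mathcal{L}(T)$ demanded by \eqref{Conjecture1}.
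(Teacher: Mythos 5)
First, a point of status: what you were asked to prove is \Cref{conj2}, which the paper itself never proves --- it is an open hypothesis, supported only conditionally. Your proposal correctly recognizes this, and the reduction you sketch is, in substance, exactly the paper's own treatment: your ``Montgomery-type input'' (a $y$-independent main term plus an error term controlled at the scale $T\mathcal{L}(T)/\beta(T)^{2}$) is the content of \Cref{cor4}, which derives \eqref{Conjecture2b} from the Ford--Soundararajan--Zaharescu asymptotic \eqref{StrongFConj} with precisely the calibration $1\le\beta(T)\ll(\log T)^{B/2}$ that you also arrive at; and your closing ``consistency'' identity is \Cref{Lemma2} verbatim (your sign agrees with \eqref{lemma1eq}), exploited with the kernel cutoff at $|s|\asymp\beta^{-1}\log(\beta\log T)$ exactly as in the paper's proof of \Cref{thm2} --- this is why the window in \eqref{Conjecture2b} is $[1/(2\log T),\,2\log T]$ in the first place.

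Two concrete criticisms. First, your step (i) is both superfluous and unworkable. Once the input asymptotic with a flat main term is assumed, \eqref{Conjecture2b} follows from the triangle inequality applied to $\mathcal{E}(xv,T)-\mathcal{E}(x,T)$, so the integral representation buys nothing; and if you instead try to extract the increment bound from the polarization identity, it fails, because $S(xve^{s},T)-S(xe^{s},T)=\sum_{0<\gamma\le T}(xe^{s})^{i\gamma}\left(v^{i\gamma}-1\right)$ is not small: the factor $v^{i\gamma}$ oscillates fully for $\gamma\gg 1/|\log v|$, which is almost all of $(0,T]$ for $v$ at the edge of the window, so Cauchy--Schwarz returns the trivial scale $T\log^{2}T$ with no $\beta^{-2}$ saving. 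Your step (ii) then simply assumes the conclusion; that is legitimate when the goal is to motivate a conjecture (the paper does the same via \eqref{StrongFConj}), but it should be presented as such rather than as a proof step. Second, you gloss over the one genuinely nontrivial point in the paper's proof of \Cref{cor4}: the range bookkeeping. The hypothesis \eqref{StrongFConj} is deliberately postulated in the \emph{enlarged} range $W(x/\log T)\ll T\ll x^{1/2}\log^{3}x$ precisely so that it can be applied at $xv$ for every $v\in[1/(2\log T),2\log T]$: one must verify $W(xv/\log T)\ll W(x)\ll T$ and $T\ll x^{1/2}\log^{2}x\ll(xv)^{1/2}\log^{3}(xv)$, the latter using $v\gg 1/\log T$ and $\log T\ll\log x$. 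Had the input been assumed only in the range appearing in \eqref{Conjecture2b} itself, the endpoint $v\asymp 1/(2\log T)$ with $T\asymp x^{1/2}\log^{2}x$ would take $xv$ outside the hypothesis range. Any correct write-up of your plan must include this verification; your phrase ``uniform in the stated range'' silently elides it.
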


\begin{theorem}\label{thm2} \Cref{conj2} implies \Cref{conj1} and \eqref{Conjecture1} with the same choices of $\mathcal{L}(T)$, $\beta(T)$, and $W(x)$. 
\end{theorem}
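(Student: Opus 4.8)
The plan is to reduce the whole statement to an exact convolution identity relating $F_\beta$ to $F=F_1$, and then to bound the resulting error term using \eqref{Conjecture2b}. First I would record the representation behind \eqref{FbetaIntegral}: writing $\xi=\log x$ and using that $w_\beta$ is the Fourier transform of the Poisson kernel $P_\beta(t)=\beta e^{-2\beta|t|}$, interchanging the sum with the transform gives
\[ F_\beta(x,T)=\int_{-\infty}^\infty P_\beta(t)\,G(\xi+t)\,dt, \qquad G(\eta):=\Big|\sum_{0<\gamma\le T}e^{i\gamma\eta}\Big|^2\ge 0. \]
Since $w_\beta/w_1=\beta^2-(\beta^2-1)w_\beta$ on the Fourier side, the kernels satisfy $P_\beta=P_1*\nu$ with $\nu=\beta^2\delta_0-(\beta^2-1)P_\beta$; convolving with $G$, recalling $\int P_\beta=1$, and subtracting off the constant part yields the exact identity
\[ F_\beta(x,T)=F(x,T)-(\beta^2-1)\,\beta\int_0^\infty e^{-2\beta|\log v|}\big(F(xv,T)-F(x,T)\big)\frac{dv}{v}. \]
This is the engine of the proof: $F_\beta$ equals $F$ plus a weighted average of exactly the differences that Conjecture \ref{conj2} controls, and the decisive point is that the factor $\beta^2-1$ out front is compensated by the $\beta^2$ in the denominator of \eqref{Conjecture2b}.

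Next I would estimate the correction integral. On the range $v\in[\tfrac1{2\log T},2\log T]$, Conjecture \ref{conj2} gives $|F(xv,T)-F(x,T)|\ll T\mathcal L(T)/\beta^2$ uniformly, so this part is $\le(\beta^2-1)\tfrac{T\mathcal L(T)}{\beta^2}\,\beta\int_{-\infty}^\infty e^{-2\beta|\eta|}\,d\eta\ll T\mathcal L(T)$, the kernel having total mass $1$. On the complementary tail I would use the trivial bound $|F(y,T)|\le F(1,T)\ll T\log^2T$ from \eqref{trivial}, together with $(\beta^2-1)\beta\int_{|\eta|>\log(2\log T)}e^{-2\beta|\eta|}\,d\eta=(\beta^2-1)(2\log T)^{-2\beta}\ll(\log T)^{-2}$ for all $\beta\ge1$, so the tail contributes only $\ll T$. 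Hence the correction is $\ll T\mathcal L(T)$ and
\[ F_\beta(x,T)=F(x,T)+O\!\big(T\mathcal L(T)\big). \]

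This reduces the theorem to the single bound $F(x,T)\ll T\mathcal L(T)$ over the same range, and here is where the real difficulty lies. I would exploit $F=F_1\ge0$ together with the unconditional mean value
\[ \frac1H\int_{\xi}^{\xi+H}F(e^s,T)\,ds=N(T)+O\big(T\log^2T\,\log\log T/H\big), \]
where $N(T)$ counts the zeros up to height $T$; for $H\gg\log T$ this is $\ll T\log T\ll T\mathcal L(T)$. By non-negativity there is then a point $x_0$ in the window with $F(x_0,T)\ll T\mathcal L(T)$, and Conjecture \ref{conj2} should let me transport this bound to the target $x$.

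The main obstacle is precisely this transport. Conjecture \ref{conj2} only controls the oscillation of $F$ across multiplicative windows of size $2\log T$, i.e.\ additive windows of size $\log\log T$ in $\xi$, whereas the mean value is effective only on windows of size $\gg\log T$. Bridging the two requires roughly $\log T/\log\log T$ applications, each costing $T\mathcal L(T)/\beta^2$, so the argument runs cleanly when $\beta$ is not too small and is tightest at $\beta=1$—exactly the case singled out in \Cref{cor2}, where the bound on $F$ is instead assumed outright. Making this final step uniform over the full admissible range $1\le\beta\ll\log^3T$, presumably by feeding a sharper (Montgomery-type) input into the mean value to shrink the effective averaging window, is the delicate part I would expect to demand the most care.
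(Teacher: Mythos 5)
Your first two paragraphs are, in substance, exactly the paper's proof. The convolution identity you derive is Lemma \ref{Lemma2} (the paper proves it from the algebra identity $w_\beta = w + (1-\beta^2)(ww_\beta - w)$, which is your relation $w_\beta/w_1=\beta^2-(\beta^2-1)w_\beta$ rearranged); the paper then splits the correction integral at $V=\log(\beta\log T)/\beta$, kills the tail with the trivial bound \eqref{trivial} (getting $\ll T$), and notes that $e^{\pm V}=(\beta\log T)^{\pm1/\beta}$ lies in $[\tfrac{1}{2\log T},2\log T]$ for $\beta\ge1$, so that \eqref{Conjecture2b} applies on the main range and the prefactor $\beta^2$ is absorbed by the $\beta^{-2}$ in the conjecture. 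Your bookkeeping agrees with this throughout, so you have correctly arrived at $F_\beta(x,T)=F(x,T)+O(T\mathcal{L}(T))$, which is where the paper's proof stops and declares the theorem established.

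Your third paragraph is where you depart, and it is the part you should cut. You are right to notice that passing from the difference bound to \eqref{Conjecture1} formally still wants $F(x,T)\ll T\mathcal{L}(T)$ (automatic from \eqref{trivial} only when $\mathcal{L}(T)\gg\log^2T$; in the paper's actual application, \Cref{cor4}, it is supplied outright by \eqref{StrongFConj} since there $F=N(T)(1+O((\log T)^{-B}))\ll T\log T$). But your proposed route to that bound fails at its first step: the ``unconditional mean value'' $\frac1H\int_\xi^{\xi+H}F(e^s,T)\,ds=N(T)+O(T\log^2T\log\log T/H)$ is not a known result. Expanding the integral gives the diagonal $HN(T)$ plus $\sum_{\gamma\ne\gamma'}w(\gamma-\gamma')e^{i\xi(\gamma-\gamma')}\bigl(e^{iH(\gamma-\gamma')}-1\bigr)/\bigl(i(\gamma-\gamma')\bigr)$, and each pair with $|\gamma-\gamma'|$ very small contributes up to $H$; with no unconditional lower bound on gaps between zeros the off-diagonal sum can only be bounded by $H\cdot T\log^2T$, so after dividing by $H$ you get $N(T)+O(T\log^2T)$ --- no better than trivial. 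Controlling those close pairs is essentially the pair correlation problem itself. And, as you concede, even granting the mean value, transporting the bound to $x$ via \eqref{Conjecture2b} costs $T\mathcal{L}(T)/\beta^2$ per multiplicative window of size $2\log T$ and needs $\asymp\log T/\log\log T$ windows, which ruins the estimate for small $\beta$. So the added step neither matches the paper nor closes; the theorem as the paper intends it is proved once you reach the displayed difference estimate.
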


In \cite{FSZ09} it is conjectured that $F(x,T)$ satisfies an asymptotic formula with a power of logarithm savings error term. Using this we obtain the following result.
\begin{corollary} \label{cor4} Let $N(T)$ denote the number of complex zeros of the Riemann zeta-function in the upper half-plane up to a given height $T>0$ (explicitly given in \eqref{N(T)}). If for some constant $B>0$ 
\begin{equation} \label{StrongFConj} F(x,T) = N(T)\left( 1 + O\left(\frac{1}{(\log T)^B}\right)\right) \qquad \text{uniformly for} \quad W\left(\frac{x}{\log T}\right) \ll T \ll x^{1/2}\log^3x,
\end{equation}
then Conjectures 1 and 2 hold with $\mathcal{L}(T)= \log T$, $1\le \beta(T)\ll (\log T)^{B/2} $, and the same choice of $W$. 
\end{corollary}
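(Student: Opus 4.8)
The plan is to prove \Cref{conj2} directly from the hypothesis \eqref{StrongFConj}, and then to obtain \Cref{conj1} for free from \Cref{thm2}, which already supplies the implication \Cref{conj2}$\,\Rightarrow\,$\Cref{conj1} with the same $\mathcal{L}(T)$, $\beta(T)$, and $W(x)$. So all of the real work goes into verifying \eqref{Conjecture2b}; once it holds, nothing further is needed for \Cref{conj1}.

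To establish \eqref{Conjecture2b} I would fix $x,T$ with $W(x)\ll T\ll x^{1/2}\log^2x$ and let $v$ range over $\tfrac{1}{2\log T}\le v\le2\log T$. The key observation is that the main term $N(T)$ in \eqref{StrongFConj} does not depend on the first argument, so subtracting the formula at $x$ from the formula at $xv$ cancels it and leaves only the error:
\[
F(xv,T)-F(x,T)=N(T)\,O\!\big((\log T)^{-B}\big)\ll T(\log T)^{1-B},
\]
using $N(T)\ll T\log T$ from \eqref{N(T)}. With $\mathcal{L}(T)=\log T$ the target in \eqref{Conjecture2b} is $T\mathcal{L}(T)/\beta(T)^2=T\log T/\beta(T)^2$, and $T(\log T)^{1-B}\ll T\log T/\beta(T)^2$ is equivalent to $\beta(T)^2\ll(\log T)^B$, i.e.\ to $1\le\beta(T)\ll(\log T)^{B/2}$ --- exactly the range claimed (to be intersected with the standing bound $\beta(T)\ll\log^3T$ when $B>6$). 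The choice $\mathcal{L}(T)=\log T$ is admissible since $\log T\ll\log T\ll\beta(T)\log^2T$.

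The step that actually needs care --- and the one I expect to be the main obstacle --- is checking that \eqref{StrongFConj} may legitimately be applied at \emph{both} arguments $x$ and $xv$, uniformly over the full ranges of $T$ and $v$. This is precisely where the gap between the $\log^2x$ appearing in \Cref{conj2} and the $\log^3x$ in \eqref{StrongFConj}, together with the shift by $\log T$ inside $W$, gets used up. For the argument $x$ one has $W(x/\log T)\le W(x)\ll T$ and $T\ll x^{1/2}\log^2x\le x^{1/2}\log^3x$, so $(x,T)$ lies in the validity window. For the argument $xv$ the bounds on $v$ give $\tfrac{x}{2\log T}\le xv\le2x\log T$; hence $xv/\log T\le2x$ and $W(xv/\log T)\le W(2x)\ll T$ under the mild regularity $W(2x)\ll W(x)$ (valid for the natural polylogarithmic $W$), while $xv\ge x/(2\log T)$ gives $(xv)^{1/2}\log^3(xv)\gg x^{1/2}(\log x)^3/(\log T)^{1/2}\gg x^{1/2}(\log x)^{5/2}\gg T$ because $\log T\ll\log x$ throughout the range. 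Thus $(xv,T)$ also lies in the window, uniformly in $v$, and the subtraction above is justified.

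Having secured \eqref{Conjecture2b} for the stated $\mathcal{L}$, $\beta$, and $W$, I would then invoke \Cref{thm2} to conclude \Cref{conj1} and \eqref{Conjecture1} with the same parameters, which finishes the proof. The only nontrivial part is the uniform range bookkeeping of the previous paragraph: one must be sure that dilating the first argument of $F$ by the entire factor $v\in[\tfrac{1}{2\log T},2\log T]$ never moves it outside the window where \eqref{StrongFConj} is assumed to hold.
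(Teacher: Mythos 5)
Your proposal is correct and follows essentially the same route as the paper: apply \eqref{StrongFConj} at both $x$ and $xv$, cancel the $x$-independent main term $N(T)$ to get $|F(xv,T)-F(x,T)|\ll T\log T/(\log T)^B$, deduce \eqref{Conjecture2b} for $1\le\beta(T)\ll(\log T)^{B/2}$, and verify the range bookkeeping via $W(xv/\log T)\ll W(x)\ll T$ and $x^{1/2}\log^2x\ll(xv)^{1/2}\log^3(xv)$ before invoking \Cref{thm2} for \Cref{conj1}. Your range verification is in fact spelled out in slightly more detail than the paper's.
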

Thus, for example, we obtain the result \eqref{lem3eq} from \eqref{StrongFConj} with $W(x)=(\log x)^A$ and $B=6$.

\section{Two Lemmas}

Our first lemma is a slight generalization of a lemma from \cite{GoldH-B84} and \cite{Heath-Brown}. 
\begin{lem}\label{Lemma1} For $x\ge 1$, $3\le s<t$, and $0<\beta =\beta(t) \le t$, let
\begin{equation} \label{calF} \mathcal{F}_\beta(x,t,s) := \max_{s\le v,v'\le t}F_{\beta(v')}(x,v).\end{equation}
Then we have 
\begin{equation} \left|\sum_{s< \gamma\le t} x^{i\gamma} \right| \ll \sqrt{\frac{t}{\beta(t)}\mathcal{F}_\beta(x,t,s)}. \end{equation}
\end{lem}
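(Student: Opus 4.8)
The plan is to deduce \Cref{Lemma1} from the integral representation of $F_\beta$ combined with a smoothing inequality that converts a weighted mean square of an exponential sum over zeros into a pointwise bound. Write $\theta=\log x$ and, for $v>0$, set $P_v(\theta):=\sum_{0<\gamma\le v}e^{i\gamma\theta}$, so that $\sum_{s<\gamma\le t}x^{i\gamma}=P_t(\theta)-P_s(\theta)$. Expanding the weight by its Fourier representation $w_\beta(u)=\beta\int_{-\infty}^{\infty}e^{-2\beta|\xi|}e^{iu\xi}\,d\xi$ and interchanging summation and integration gives the representation \eqref{FbetaIntegral},
\begin{equation*} F_{\beta}(x,v)=\beta\int_{-\infty}^{\infty}e^{-2\beta|\xi|}\bigl|P_v(\theta+\xi)\bigr|^2\,d\xi, \end{equation*}
which is manifestly nonnegative and is the only way $F_\beta$ will enter the argument.

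The heart of the matter is a smoothing inequality: for any exponential sum $g(\theta)=\sum_k e^{i\lambda_k\theta}$ with frequencies $\lambda_k\in(0,t]$ and any $0<\beta\le t$,
\begin{equation*} |g(\theta)|^2\ll t\int_{-\infty}^{\infty}e^{-2\beta|\xi|}\bigl|g(\theta+\xi)\bigr|^2\,d\xi. \end{equation*}
I would prove this by a one–variable Sobolev argument: for any $\delta>0$ one has $|g(\theta)|^2\le\tfrac1\delta\int_\theta^{\theta+\delta}|g|^2+\int_\theta^{\theta+\delta}2|g|\,|g'|$, and the second term is handled by Cauchy--Schwarz together with the Bernstein inequality $\int_I|g'|^2\ll t^2\int_{I'}|g|^2$, which is valid since $g$ has spectrum in $[-t,t]$. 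Choosing $\delta\asymp 1/t$ collapses both terms to $\ll t\int_{|\xi|\le c/t}|g(\theta+\xi)|^2\,d\xi$; because $\beta\le t$, the kernel obeys $e^{-2\beta|\xi|}\gg 1$ on $|\xi|\le c/t$, so this local integral is $\ll\int_{-\infty}^{\infty}e^{-2\beta|\xi|}|g(\theta+\xi)|^2\,d\xi$, as required.

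To finish, I would apply the smoothing inequality to $g=P_t$ and to $g=P_s$; both have frequencies in $(0,t]$, and $\beta=\beta(t)\le t$ by hypothesis, so combining with the representation above yields $|P_v(\theta)|^2\ll\frac{t}{\beta(t)}F_{\beta(t)}(x,v)$ for $v\in\{s,t\}$. Since
\begin{equation*} \Bigl|\sum_{s<\gamma\le t}x^{i\gamma}\Bigr|^2=\bigl|P_t(\theta)-P_s(\theta)\bigr|^2\le 2|P_t(\theta)|^2+2|P_s(\theta)|^2, \end{equation*}
and $F_{\beta(t)}(x,t),\,F_{\beta(t)}(x,s)\le\mathcal{F}_\beta(x,t,s)$ (take $v'=t$ and $v\in\{s,t\}$ in \eqref{calF}), the claimed bound follows on taking square roots.

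The main obstacle will be the smoothing inequality, and specifically making the Bernstein estimate rigorous for the infinite, almost–periodic exponential sum over zeros on a finite interval while producing exactly the factor $t/\beta(t)$. This is also where the hypothesis $\beta\le t$ is essential: it guarantees that the Poisson-type kernel $e^{-2\beta|\xi|}$ does not decay on the oscillation scale $1/t$ of $P_v$, so that its weighted mean square controls the pointwise value with the band limit $t$ as the only loss. Finally, the splitting into $P_t$ and $P_s$ is what forces the two cutoffs $v=s,t$ — and hence the maximum over $v$ in \eqref{calF} — to appear, while using $\beta=\beta(t)$ throughout explains why it suffices to take $v'=t$ in the maximum over $v'$.
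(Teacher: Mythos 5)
Your overall architecture is the same as the paper's: both proofs rest on the integral representation \eqref{FbetaIntegral} together with a Gallagher--Sobolev inequality that converts the weighted mean square $\beta\int e^{-2\beta|u|}|g(u)|^2\,du$ into a pointwise bound on $g(0)$, with the derivative term handled by Cauchy--Schwarz. (The paper applies the Sobolev inequality directly to the difference sum $\sum_{s<\gamma\le t}x^{i\gamma}e^{i\gamma u}$ and splits into $F_{\beta(t)}(x,s)+F_{\beta(t)}(x,t)$ via $|a\pm b|^2\le 2a^2+2b^2$ inside the weighted integral, rather than splitting $P_t-P_s$ pointwise as you do; that difference is cosmetic.)

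The genuine gap is exactly the step you flag as ``the main obstacle'': the local Bernstein inequality $\int_I|g'|^2\ll t^2\int_{I'}|g|^2$ for the non-harmonic exponential sum $g(u)=\sum_{s<\gamma\le t}x^{i\gamma}e^{i\gamma u}$. This is not a standard off-the-shelf fact for almost-periodic sums on an interval of length $\asymp 1/t$, and as stated your sketch does not produce it. The paper's proof supplies precisely the missing ingredient, and it is elementary: write $\sum_{s<\gamma\le t}\gamma x^{i\gamma}e^{i\gamma u}=t\sum_{s<\gamma\le t}x^{i\gamma}e^{i\gamma u}-\int_s^t\big(\sum_{s<\gamma\le r}x^{i\gamma}e^{i\gamma u}\big)\,dr$ by partial summation, then apply Cauchy--Schwarz to the $r$-integral. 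Each partial sum up to cutoff $r$ contributes a term $F_{\beta(t)}(x,r)/\beta(t)\le\mathcal{F}_\beta(x,t,s)/\beta(t)$, giving $\int e^{-2\beta(t)|u|}|g'(u)|^2\,du\ll (t^2/\beta(t))\,\mathcal{F}_\beta(x,t,s)$, which is the weighted Bernstein estimate you need. Note that this is also why \eqref{calF} is defined with a maximum over \emph{all} $v\in[s,t]$: your plan only ever invokes $F_{\beta(t)}(x,s)$ and $F_{\beta(t)}(x,t)$, which would make the full maximum superfluous, and that is a signal that the intended argument must pass through the intermediate cutoffs $s<r<t$. Replace the appeal to Bernstein by this partial summation step and your proof closes; the hypothesis $\beta(t)\le t$ is then used only at the very end, to absorb $1+t/\beta(t)$ into $t/\beta(t)$.
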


\begin{proof}[Proof of Lemma \ref{Lemma1}]
We first note that
\begin{equation} \label{FbetaIntegral} F_\beta(x,T) = \beta\int_{-\infty}^\infty e^{-2\beta |u|}\left|\sum_{0<\gamma \le T}x^{i\gamma}e^{i\gamma u}\right|^2\, du,
\end{equation}
which immediately follows from the formula
\begin{equation} \label{F_bformula} \beta \int_{-\infty}^\infty e^{-2\beta |u|} e^{ivu}\, du = w_\beta(v). \end{equation}
One form of the Gallagher-Sobolev inequality \cite[Lemma 1.1]{Montgomery71} states that for any $C^1$ function $f$ on the interval $[-a,a]$
\[ |f(0)| \le \frac{1}{2a}\int_{-a}^a |f(u)|\, du + \frac{1}{2}\int_{-a}^a |f'(u)|\, du. \]
Applying this with $a= 1/\beta(t)$, $3\le s<t $, and
\[ f(u) = \left(\sum_{s<\gamma \le t } x^{i\gamma}e^{i \gamma u}\right)^2,\]
we have, on using the Cauchy-Schwarz inequality
\[ \begin{split} \left|\sum_{s<\gamma\le t} x^{i\gamma} \right|^2 &\le \frac{\beta(t)}{2} \int_{-1/\beta(t)}^{1/\beta(t)}\left|\sum_{s<\gamma \le t } x^{i\gamma}e^{i \gamma u}\right|^2\, du + \int_{-1/\beta(t)}^{1/\beta(t)}\left|\sum_{s<\gamma \le t } x^{i\gamma}e^{i \gamma u}\right| \left|\sum_{s<\gamma \le t } \gamma x^{i\gamma}e^{i \gamma u}\right|\, du\\ & 
\ll \beta(t) \int_{-\infty}^{\infty}e^{-2\beta(t)|u|}\left|\sum_{s<\gamma \le t } x^{i\gamma}e^{i \gamma u}\right|^2\, du \\&
\qquad +\left( \int_{-\infty}^{\infty}e^{-2\beta(t)|u|} \left|\sum_{s<\gamma \le t } x^{i\gamma}e^{i \gamma u}\right|^2 \, du\right)^{1/2} \left( \int_{-\infty}^{\infty}e^{-2\beta(t)|u|} \left|\sum_{s<\gamma \le t } \gamma x^{i\gamma}e^{i \gamma u}\right|^2 \, du\right)^{1/2}.
\end{split}\]
Using the inequality $|a\pm b|^2\le 2a^2 +2b^2$, we see by \eqref{FbetaIntegral} and \eqref{calF} that 
\[ \beta(t) \int_{-\infty}^{\infty}e^{-2\beta(t)|u|}\left|\sum_{s<\gamma \le t } x^{i\gamma}e^{i \gamma u}\right|^2\, du \le 2F_{\beta(t)}(x, s) + 2F_{\beta(t)}(x, t) \le 4 \mathcal{F}_\beta(x,t,s), \]
and therefore 
\begin{equation} \label{halfway} \left|\sum_{s<\gamma\le t} x^{i\gamma} \right|^2 \ll \mathcal{F}_\beta(x,t,s) + \left(\frac1{\beta(t)} \mathcal{F}_\beta(x,t,s)\right)^{1/2} \left( \int_{-\infty}^{\infty}e^{-2\beta(t)|u|} \left|\sum_{s<\gamma \le t } \gamma x^{i\gamma}e^{i \gamma u}\right|^2 \, du\right)^{1/2}.
\end{equation}
By partial summation with 
\begin{equation} \label{S(r)} S(r) := \sum_{s<\gamma\le r} (xe^u)^{i\gamma}, \end{equation}
we have 
\[\begin{split} \sum_{s<\gamma \le t } \gamma x^{i\gamma}e^{i \gamma u} &= \int_{s}^t r dS(r) = tS(t) - \int_{s}^t S(r)\, dr \\ & = t\sum_{s<\gamma \le t } x^{i\gamma}e^{i \gamma u} - \int_s^{t} \sum_{s<\gamma \le r } x^{i\gamma}e^{i \gamma u}\, dr. \end{split}\]
By the same argument used to obtain \eqref{halfway}, as well as \eqref{FbetaIntegral} and the Cauchy-Schwarz inequality, we have
\[ \begin{split} \int_{-\infty}^{\infty}&e^{-2\beta(t)|u|} \left|\sum_{s<\gamma \le t} \gamma x^{i\gamma}e^{i \gamma u}\right|^2 \, du \\ &\ll t^2 \int_{-\infty}^{\infty}e^{-2\beta(t)|u|} \left|\sum_{s<\gamma \le t } x^{i\gamma}e^{i \gamma u}\right|^2 \, du + \int_{-\infty}^{\infty}e^{-2\beta(t)|u|} \left|\int_s^{t} \sum_{s<\gamma \le r } x^{i\gamma}e^{i \gamma u}\, dr\right|^2 \, du \\ & \ll \frac{t^2}{\beta(t)} \mathcal{F}_\beta(x,t,s) + (t-s) \int_s^{t} \left(\int_{-\infty}^{\infty}e^{-2\beta(t)|u|} \left|\sum_{s<\gamma \le r} x^{i\gamma}e^{i \gamma u}\right|^2 \,du \right)\,dr \\&
\ll \frac{t^2}{\beta(t)} \mathcal{F}_\beta(x,t,s).
\end{split}\]
Substituting this into \eqref{halfway} gives
\[ \left|\sum_{s<\gamma\le t} x^{i\gamma} \right|^2 \ll \left( 1 + \frac{t}{\beta(t)} \right)\mathcal{F}_\beta(x,t,s)\ll
\frac{t}{\beta(t)} \mathcal{F}_\beta(x,t,s)\]
if $0< \beta(t) \le t$. Lemma \ref{Lemma1} now follows. 
\end{proof}

Our next lemma is a formula from \cite{FSZ09} for evaluating $F_\beta(x,T)$ using $F(x,T)$.
\begin{lem}\label{Lemma2} We have, for $x\ge 1$, $T\ge 3$, and $\beta >0$, 
\begin{equation} \label{lemma1eq} F_\beta(x,T) = F(x,T) + \beta(1-\beta^2) \int_{-\infty}^\infty (F(xe^u,T)- F(x,T)) e^{-2\beta |u|}\, du. \end{equation}
\end{lem}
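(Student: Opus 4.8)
The plan is to reduce the claimed identity \eqref{lemma1eq} to a single pointwise algebraic identity in the variable $v=\gamma-\gamma'$ and then verify that identity by direct computation. First I would expand both $F(x,T)$ and $F(xe^u,T)$ using the definition \eqref{Fbeta} with $\beta=1$, writing $F(x,T)=\sum_{0<\gamma,\gamma'\le T} x^{i(\gamma-\gamma')}w_1(\gamma-\gamma')$ and $F(xe^u,T)=\sum_{0<\gamma,\gamma'\le T} x^{i(\gamma-\gamma')}e^{i(\gamma-\gamma')u}w_1(\gamma-\gamma')$. Because there are only finitely many zeros up to height $T$, both sums are finite and I may freely interchange the summation with the $u$-integral on the right-hand side of \eqref{lemma1eq}. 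This collapses the right-hand side to $\sum_{0<\gamma,\gamma'\le T} x^{i(\gamma-\gamma')}w_1(\gamma-\gamma')\left[1+\beta(1-\beta^2)\int_{-\infty}^\infty (e^{i(\gamma-\gamma')u}-1)e^{-2\beta|u|}\, du\right]$, and comparing this term by term with $F_\beta(x,T)=\sum_{0<\gamma,\gamma'\le T} x^{i(\gamma-\gamma')}w_\beta(\gamma-\gamma')$ reduces everything to proving, for every real $v$, the scalar identity $w_\beta(v)=w_1(v)\left[1+\beta(1-\beta^2)\int_{-\infty}^\infty (e^{ivu}-1)e^{-2\beta|u|}\, du\right]$.

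Next I would evaluate the inner integral using \eqref{F_bformula}, which gives $\beta\int_{-\infty}^\infty e^{ivu}e^{-2\beta|u|}\, du=w_\beta(v)$, together with its value at $v=0$, namely $\beta\int_{-\infty}^\infty e^{-2\beta|u|}\, du=w_\beta(0)=1$ (the convergence of both integrals being guaranteed by $\beta>0$). Subtracting, $\beta\int_{-\infty}^\infty (e^{ivu}-1)e^{-2\beta|u|}\, du=w_\beta(v)-1$, so the target identity simplifies to the integral-free relation $w_\beta(v)=w_1(v)\big[1+(1-\beta^2)(w_\beta(v)-1)\big]$.

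Finally I would substitute the explicit rational forms $w_1(v)=4/(4+v^2)$ and $w_\beta(v)=4\beta^2/(4\beta^2+v^2)$ and clear denominators; after simplification both sides equal $4\beta^2/(4\beta^2+v^2)$, which completes the proof. I do not expect any genuine obstacle here: the interchange of the finite sum with the integral is immediate and the integrals converge for $\beta>0$, so the entire content lies in recognizing that the map sending $F$ to $\beta(1-\beta^2)\int_{-\infty}^\infty(F(xe^u,T)-F(x,T))e^{-2\beta|u|}\,du$ acts term by term on the exponentials $x^{i(\gamma-\gamma')}$, and then checking that the resulting scalar attached to each term is exactly $w_\beta(\gamma-\gamma')$.
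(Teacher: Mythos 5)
Your proof is correct and takes essentially the same route as the paper: reduce to a term-by-term scalar identity via \eqref{F_bformula} and the finiteness of the sum, then verify it algebraically. Your target identity $w_\beta(v)=w_1(v)\bigl[1+(1-\beta^2)(w_\beta(v)-1)\bigr]$ is just a rearrangement of the paper's stated identity $w_\beta(a)-\beta^2 w_1(a)=(1-\beta^2)w_1(a)w_\beta(a)$, and you supply the interchange and convergence details the paper leaves implicit.
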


\begin{proof}[Proof of Lemma \ref{Lemma2}] This can be easily verified directly using \eqref{FbetaIntegral} and \eqref{F_bformula}.
It can also be obtained from these equations using the algebra identity
\[ w_\beta(a) - \beta^2w(a) = (1-\beta^2)w(a)w_\beta(a), \]
which implies 
\[ w_\beta(a) = w(a) + (1-\beta^2)\left(w(a)w_\beta(a) - w(a)\right). \]

\end{proof}

\section{Proof of the theorems and corollaries}

The Riemann-von Mangoldt formula states
\begin{equation} \label{N(T)} N(T) := \sum_{0 < \gamma \le T} 1= \frac{T}{2\pi} \log \frac{T}{2 \pi} - \frac{T}{2\pi} +O(\log T) ,\end{equation}
see for example \cite[Theorem 25]{Ingham1932} or \cite[Theorem 9.4]{Titchmarsh}.
Thus $N(T) \sim \frac{T}{2\pi}\log T$, and we also obtain
\begin{equation} \label{N(T+1)-N(T)} N(T+1) -N(T) = \sum_{T<\gamma \le T+1} 1 \ll \log T, \end{equation}
(see \cite[Theorem 25a]{Ingham1932} or \cite[Theorem 9.2]{Titchmarsh}).
\begin{proof}[Proof of Theorem 1] The truncated explicit formula for $\psi(x)$ \cite[Theorem 12.5]{MontgomeryVaughan2007} implies, for $x\ge 2$ and $Y\ge 3$,
\[ \psi(x) = x - \sum_{\substack{\rho\\ |\gamma|\le Y}} \frac{x^\rho}{\rho} + O\left(\frac{x}{Y}(\log xY)^2\right) + O(\log x).\]
We take $3\le W < Y$, where
\[ Y= 3 x^{1/2}\log^2(2x).\]
Assuming RH, then the complex zeros of the zeta function come in complex conjugate pairs $\rho = 1/2+i\gamma$ and $\overline{\rho} = 1/2-i \gamma$, and we have
\[ \frac{\mathcal{R}(x)}{x^{1/2}} = -2\,{\rm Im}\sum_{W<\gamma\le Y }\frac{x^{i\gamma}}{\gamma} +O\left(\sum_{0<\gamma \le W} \frac1{\gamma}\right)+O\left(\sum_{0<\gamma \le Y} \frac1{\gamma^2}\right) + O(1).\]
Since by \eqref{N(T+1)-N(T)} we have $\sum_{0<\gamma \le W} \frac1{\gamma} \ll \log^2W$ and $\sum_{0<\gamma \le Y} \frac1{\gamma^2}\ll 1$, see \cite[Theorem 25b]{Ingham1932}, we conclude
\begin{equation}\label{Restimate1}\left|\frac{\mathcal{R}(x)}{x^{1/2}}\right| \le 2 \left|\sum_{W<\gamma\le T }\frac{x^{i\gamma}}{\gamma}\right| +O(\log^2W).\end{equation}
By partial summation using $S(r)= \sum_{s<\gamma\le r} x^{i\gamma}$ from \eqref{S(r)} with $u=0$, we have 
\[ \sum_{s<\gamma\le t }\frac{x^{i\gamma}}{\gamma} = \int_{s}^{t}\frac1{r}dS(r) 
= \frac{S(t)}{t} + \int_s^{t} \frac{S(r)}{r^2}\, dr, \]
and hence
\[\left|\sum_{s<\gamma\le t }\frac{x^{i\gamma}}{\gamma}\right| \ll \frac1{s}\max_{s\le v\le t}\left|\sum_{s<\gamma\le v} x^{i\gamma}\right|.\]
Taking $s=y$ and $t=2y$, and applying Lemma 1 with Conjecture 1, we conclude
\begin{equation}\label{neardone}\left|\sum_{y<\gamma\le 2y }\frac{x^{i\gamma}}{\gamma}\right| \ll \frac1{y}\sqrt{\frac{y}{\beta(2y)}\mathcal{F}_\beta(x,2y,y)}\ll \sqrt{ \frac{\mathcal{L}(2y)}{\beta(2y)}}.\end{equation}

Taking $y= 2^{k-1}$ we have 
\[\left|\sum_{W<\gamma\le Y }\frac{x^{i\gamma}}{\gamma}\right| \le \sum_{k_1\le k\le k_2} \left|\sum_{2^{k-1}<\gamma\le 2^{k}} \frac{x^{i\gamma}}{\gamma}\right|,\] 
where $k_1$ and $k_2$ are chosen so that $2^{k_1 -1} < W(x)\le 2^{k_1}$ and $2^{k_2-1} <Y\le 2^{k_2}$. 
Using \eqref{neardone} we have
\[ \left|\sum_{W<\gamma\le Y }\frac{x^{i\gamma}}{\gamma}\right|
\ll \sum_{1\le k\ll \log x} \sqrt{ \frac{\mathcal{L}(2^{k})}{\beta(2^{k})}} := \mathcal{M}(x),
\]
where we have applied Conjecture 1 for $F_\beta(x, u)$ with $W\ll u \ll x^{1/2}\log^2x$ as required, and we have then insignificantly increased the upper bound by extending the range of $k$.
Theorem 1 now follows from \eqref{Restimate1}. 
\end{proof}

\begin{proof}[Proof of \Cref{cor1}]
For fixed $0< a\le 3/2$, take in Conjecture \ref{conj1}, $\beta=\beta(T)=(\log{T})^{3-2a}$, $\mathcal{L}(T)=\log{T}$, and $W(x) = e^{(\log x)^{a/2}}$. Thus
\[ F_\beta(x,T) \ll T\log T \qquad \text{uniformly for} \qquad \frac{T^2}{\log^4 T}\ll x \ll T^{(\log T)^{(2-a)/a}},\]
and we have
\[ \mathcal{M}(x) \ll \sum_{1\le k \ll \log x}\sqrt{\frac{k}{k^{3-2a}}}\ll \log^ax. \]
Applying Theorem \ref{thm1} we obtain \Cref{cor1}.
\end{proof}

\begin{proof}[Proof of \Cref{cor2}]
In the case when $\beta=1$ we replace Conjecture 1 by \eqref{cor2assume} in Theorem 1 where the range where this conjecture holds is equivalent to taking $W(x)= e^{(\log x)^{3/4}}$ in \eqref{Conjecture1}. Then \eqref{neardone} becomes 
\[ \left|\sum_{y<\gamma\le 2y }\frac{x^{i\gamma}}{\gamma}\right| \ll \sqrt{\log x}. \]
Taking $y=2^{k-1}$ as we did below \eqref{neardone}, we apply this bound $\ll \log x$ times and obtain from \eqref{Restimate1} 
\[\mathcal{R}(x) \ll x^{1/2}\Big( (\log x)^{3/2} + \log^2W \Big)\] and the result follows.
\end{proof}

\begin{proof}[Proof of \Cref{cor3}]
Take $W(x) = \log^Ax$. Then by Theorem \ref{thm1} we will obtain \eqref{lem3eq} if
$$ \mathcal{M}(x)\ll A^2(\log\log x)^2. $$
Thus we need $\sqrt{\mathcal{L}(2^k)/\beta(2^k)} \ll A^2(\log 2k)/k$ which we obtain with $\mathcal{L}(T) = \log T$ by choosing $\beta(T)$ as in \eqref{betalem3}.
\end{proof}

\begin{proof}[Proof of \Cref{thm2}] By \Cref{Lemma2}, for $\beta\ge 1$
\begin{equation} \label{thm2Step1}
|F_\beta(x,T)-F(x,T)| \ll \beta^3\left( \int_{0}^V + \int_V^\infty\right) |(F(xe^{\pm u},T)- F(x,T)| e^{-2\beta u}\, du = I_1 +I_2, \end{equation}
where we take 
\[ V= \frac{\log (\beta\log T)}{\beta}. \]

Using the trivial bound $F(x,T) \ll T\log^2T$ from \eqref{trivial},
we have 
\[ I_2 \ll \beta^3 T\log^2 T \int_V^\infty e^{-2\beta u}\, du = \frac{\beta^2}2 T\log^2Te^{-2\beta V} = \frac12 T,\]
which is acceptable and holds for all $x$. 
Next, for $I_1$ we note that for $0\le u\le V$ we have $e^{-V} \le e^{\pm u} \le e^{V}$, and hence 
\[ I_1 \ll \beta^3 \max_{ e^{-V}\le v \le e^V}|F(xv,T)-F(x,T)| \int_0^\infty e^{-2\beta u}\, du \ll \beta^2\max_{ e^{-V}\le v \ll e^V}|F(xv,T)-F(x,T)|.\]
Letting $f(x) = \frac{\log x}{x}$, by calculus we note for $x >0$ that $f(x) \le f(e) = \frac{1}{e}$. Thus for $\beta\ge 1$
\[ e^V = (\beta \log T)^{\frac1{\beta}}\le e^{f(\beta)}\log T\le e^{\frac{1}{e}}\log T < 2 \log T,\]
and similarly $e^{-V}\ge e^{-\frac{1}{e}}\frac{1}{\log T} > \frac{1}{2\log T}.$
Thus by \Cref{conj2} we have $I_1\ll T\mathcal{L}(T)$ and \Cref{thm2} follows.
\end{proof}
\begin{proof}[Proof of \Cref{cor4}] We apply \eqref{StrongFConj} with $x$ replaced with $xv'$, where $v'$ is a value with $1/\log T\ll v' \ll \log T$. Then
$|F(xv',T)-F(x,T)| \ll T\log T/(\log T)^B$, and the bound in \eqref{Conjecture2b} holds if $1\le \beta(T) \ll (\log T)^{B/2}$. We have obtained this bound using \eqref{StrongFConj} in the range $W(xv'/\log T)\ll T\ll (xv')^{1/2}\log^3(xv')$, and since $W(xv'/\log T)\ll W(x)$ and $x^{1/2}\log^2x\ll (xv')^{1/2}\log^3(xv')$, we see \eqref{Conjecture2b} holds in the stated range $W(x) \ll T\ll x^{1/2}\log^2x$.
\end{proof}

\section*{Acknowledgement}

The second author was supported by JSPS KAKENHI Grant Numbers 18K13400 and 22K13895, and also MEXT Initiative for Realizing Diversity in the Research Environment.

\section*{Conflict of Interest}
We have no conflicts of interest to disclose.

\section*{Data Availability Statements}
Data sharing is not applicable to this article as no datasets were generated or analyzed during the current study.



\begin{thebibliography}{ABC99}


\bibitem[FSZ09]{FSZ09} K. Ford, K. Soundararajan, A. Zaharescu, {\it On the distribution of imaginary parts of zeros of the Riemann zeta function, II}, Math. Ann. {\bf 343} (2009), 487--505.

\bibitem[Gal80]{Gallagher80} P. X. Gallagher, {\it Some consequences of the Riemann hypothesis}, Acta Arith. {\bf 37} (1980), 339--343.

\bibitem[GM78]{GaMu78} P. X. Gallagher and J. H. Mueller, {\it Primes and zeros in short intervals}, J. reine angew. Math. {\bf 303/304} (1978), 205--220.


\bibitem[GH84]{GoldH-B84} D. A. Goldston and D. R. Heath-Brown, {\it A note on the differences between consecutive primes}, Math. Ann. {\bf 266} (1984), 317--320.

\bibitem[GM87]{GoldMont} D. A. Goldston and H. L. Montgomery, {\it Pair correlation of zeros and primes in short intervals,} Analytic Number Theory and Diophantine Problems (A. C. Adolphson and et al., eds.), Proc. of a Conference at Oklahoma State University (1984), Birkhauser
Verlag, 1987, 183--203.

\bibitem[GS23]{GS21} D. A. Goldston and A. I. Suriajaya, {\it On an average Goldbach representation formula of Fujii}, to appear in Nagoya Math. J., preprint in arXiv:2110.14250 [math.NT].


\bibitem[Hea82]{Heath-Brown} D. R. Heath-Brown, {\it Gaps between primes, and the pair correlation of zeros of the zeta-function}, Acta Arith. {\bf 41} (1982), 85--99.

\bibitem[Ing32]{Ingham1932} A. E. Ingham, {\it The Distribution of Prime Numbers}, Cambridge Mathematical Library, Cambridge University Press, Cambridge, 1990. Reprint of the 1932 original; With a foreword by R. C. Vaughan.

\bibitem[Koc01]{vonKoch1901} H. von Koch, {\it Sur la distribution des nombres premiers}, Acta Math. {\bf 24} (1901), 159--182.




\bibitem[LPZ12]{LPZ12} Alessandro Languasco, Alberto Perelli, Alessandro Zaccagnini, {\it Explicit relations between pair correlation of zeros and primes in short intervals}, J. Math. Anal. Appl. {\bf 394} (2012), no. 2, 761--771.

\bibitem[LPZ16]{LPZ16} Alessandro Languasco, Alberto Perelli, Alessandro Zaccagnini, {\it An extension of the pair-correlation conjecture and applications}, Math. Res. Lett. {\bf 23} (2016), no. 1, 201--220.

\bibitem[LPZ17]{LPZ17} A. Languasco, A. Perelli, A. Zaccagnini, {\it An extended pair-correlation conjecture and primes in short intervals}, Trans. Amer. Math. Soc. {\bf 369} (2017), no. 6, 4235--4250.

\bibitem[Mon71]{Montgomery71} Hugh L. Montgomery, {\it Topics in Multiplicative Number Theory}, Lecture Notes in Mathematics, Vol. 227, Springer-Verlag, Berlin-New York, 1971.

\bibitem[Mon72]{Montgomery72} H.L. Montgomery, The Pair Correlation of Zeros of the Zeta Function, in: Analytic Number Theory, Proc. Sympos. Pure Math., Vol. XXIV, St. Louis Univ., St. Louis, Mo., 1972, 181--193.

\bibitem[MV07]{MontgomeryVaughan2007} H. L. Montgomery and R. C. Vaughan, {\it Multiplicative Number Theory}, Cambridge Studies in Advanced Mathematics {\bf 97}, Cambridge University Press, Cambridge, 2007.

\bibitem[Mue76]{Mueller76} J. H. Mueller, {\it Primes and zeros in short intervals}, Thesis, Columbia University 1976.



\bibitem[Sch03]{Schmidt1903} E. Schmidt, {\it \"Uber die Anzahl der Primzahlen unter gegebener Grenze}, Math. Ann. {\bf 57} (1903), no. 2, 195--204.


\bibitem[Tit86]{Titchmarsh} E. C. Titchmarsh, {\sl The Theory of the Riemann Zeta-Function}, 
2nd ed., revised by D. R. Heath-Brown, Clarendon (Oxford), 1986.

\end{thebibliography}
\end{document}